\documentclass[a4paper,12pt]{article}
\usepackage[left=2cm,right=2cm,top=2cm,bottom=2cm]{geometry}
\usepackage{comment, todonotes,colortbl,xcolor}

\usepackage{amsmath, amssymb, amsthm}

\DeclareMathOperator{\tr}{\mathrm{tr}\,}

\newcommand{\R}{\mathbb{R}}

\newcommand{\cof}{\mathrm{cof}\,}

\newtheorem{theorem}{Theorem}

\newtheorem{lemma}{Lemma}
\theoremstyle{definition}

\newtheorem{remark}{Remark}
\newtheorem{corollary}{Corollary}
\setlength{\parindent}{0pt}
\setlength{\parskip}{.5em}

\title{Polyconvex double well functions}

\begin{document}
 
\author{Didier Henrion$^{1,2}$, Martin Kru\v{z}\'{i}k$^{3,4}$}
\date{\today}

\footnotetext[1]{LAAS-CNRS, Universit\'e de Toulouse, France} 
\footnotetext[2]{Faculty of Electrical Engineering, Czech Technical University in Prague, Czechia}
\footnotetext[3]{Czech Academy of Sciences, Institute of Information Theory and Automation,  Prague, Czechia}
\footnotetext[4]{Faculty of Civil Engineering, Czech Technical University in Prague, Czechia}

	\maketitle
\begin{abstract}
	We investigate polyconvexity of the double well function $f(X) := |X-X_1|^2|X-X_2|^2$ for given matrices $X_1, X_2 \in \R^{n \times n}$. Such functions are fundamental in the modeling of phase transitions in materials, but their non-convex nature presents challenges for the analysis of variational problems.  Polyconvexity of $f$ is related to the singular values of the matrix difference $X_1 - X_2$. We prove that $f$ is polyconvex if and only if the square of the largest singular value does not exceed the sum of the squares of the other singular values.  This condition allows the function to be decomposed into the sum of a strictly convex part and a null Lagrangean. As a direct application of this result, we prove an existence and uniqueness theorem for the corresponding Dirichlet minimization problem of the integral functional.
\end{abstract}
	
	\section{Introduction}

A central challenge in the calculus of variations is to establish the existence of solutions to variational problems, typically formulated as minimizing an integral functional of the form
\[
I(y) := \int_\Omega f(\nabla y(x)) \, dx,
\]
where $\Omega \subset \R^n$ is a  bounded Lipschitz domain, $y: \Omega \to \R^m$ is a deformation, and $f: \R^{m \times n} \to \R$ is a  continuous stored energy function.
For scalar-valued functions and/or one-dimensional domains  ($m=1$ and/or $n=1$), the existence of minimizers can often be guaranteed under the assumption that the integrand $f$ is convex. However, in many physical applications, particularly in nonlinear elasticity and materials science,  $y$ is vector-valued ($m=n>1$)  and $f$ is non-convex. This non-convexity comes from physical grounds and is essential for modeling complex material behaviors such as phase transitions, where a material can exist in multiple stable states, represented by different energy wells,  see e.g. \cite{sirev,m3as}. 

A groundbreaking contribution to address this challenge was made by John Ball in 1977 \cite{Ball1977}. He  introduced the weaker notion of polyconvexity. This condition is strong enough to ensure the weak lower semicontinuity of the energy functional $I$;  a key requirement for the direct method in the calculus of variations -- yet flexible enough to include physically relevant non-convex energy functions. Formally, a function $f: \R^{n \times n} \to \R$ is polyconvex if it can expressed as a convex function of all the minors (i.e. sub-determinants of all orders) of its matrix argument, i.e., $f(X)=h(T(X))$, where $T:\R^{n\times n}\to \R^{{2n\choose n} -1}$ is the vector map  returning all subdeterminants of the argument  and $h:\R^{{2n\choose n} -1}\to\R$ is convex.   Polyconvexity implies (Morrey's) quasiconvexity, which is a weaker condition and which is equivalent to sequential weak  lower semicontinuity of $I$ on $W^{1, p}(\Omega;\R^n)$ whenever $0\le f\le C(1+|\cdot|^p)$ for some $C>0$ is continuous.  We say that such $f$ is quasiconvex  if 
\begin{align}\label{qc}
    \int_\Omega f(\nabla\varphi(x))\, dx\ge \mathcal{L}^n(\Omega)f(A) 
\end{align}
for all $A\in\R^{n\times n}$ and all $\varphi\in W^{1, p}(\Omega;\R^n)$ such that $\varphi(x)=Ax$ for $x\in\partial\Omega$. It was shown in \cite[Prop.~2.4]{ball-murat} that  in this case one can equivalently consider $\varphi$ from the smaller space $W^{1,\infty}(\Omega;\R^n)$.   
The theory and its applications are now standard material in advanced monographs on the subject, such as \cite{Ciarlet1988}, \cite{Dacorogna2008},  or \cite{kruzik-roubicek}. 

In this paper, we investigate the polyconvexity of a specific and widely used {double well energy function}:
\[
f(X) := |X - X_1|^2 |X - X_2|^2,
\]
where $X_1$ and $X_2$ are two given matrices in $\R^{n \times n}$ representing the preferred states or phases of a material.

The paper is structured as follows. We begin in Sections \ref{sec:dim2} and \ref{sec:dim3} by analyzing the polyconvexity of this function for the specific cases of $n=2$ and $n=3$ with  $X_1=-X_2 = I_n$, the identity matrix in $n$ dimensions,  demonstrating the core mechanisms through direct computations. In Section \ref{sec:poly}, we present our main result, which provides a necessary and sufficient condition for the polyconvexity of the double well function in any dimension $n>1$.

Polyconvexity of $f$ is related to the singular values of the matrix difference $X_1 - X_2$. We prove that $f$ is polyconvex if and only if the square of the largest singular value does not exceed the sum of the squares of the other singular values.

The proof reveals that under this condition, the function can be decomposed into the sum of a strictly convex function and a null Lagrangean \cite{Dacorogna2008}.
Finally, as a direct and significant application of this result, in Section \ref{sec:cov} we prove existence and uniqueness of a solution to the corresponding Dirichlet problem in the calculus of variations. This demonstrates that, despite its non-convexity and the double-well structure, the energy functional is well-behaved under the identified  spectral  condition \eqref{spectral-condition}, guaranteeing that the associated variational problem is well-posed and allows for a unique minimizer.

We use the notation  $\tr X$ for the trace of a matrix $X\in\R^{n\times n}$, $|X|^2 = \tr X^T X$ for the squared Frobenius norm, and $I_n$ for the identity matrix in $\R^{n\times n}$. Moreover,
$\cof X\in \R^{n\times n}$ is the cofactor matrix of $X$, i.e., $(\cof X)_{ij}=(-1)^{i+j}\det X'_{ij}$ where $X'_{ij}$ is the submatrix of $X$ obtained by removing the $i$-th row and the $j$-th column from $X$.  Further we denote $\mathrm{SO}(n)=\{R\in\R^{n\times n}:\ R^T R=RR^T =I_n,\ \det R=1\}$ the set of  rotations, and  $\mathrm{O}(n)= \{R\in\R^{n\times n}:\ R^T R=RR^T =I_n,\ \det R=\pm 1\}$ the set of all orthonormal matrices. The dyadic product of $u,v\in\R^n$ is denoted $u\otimes v\in\R^{n\times n}$ with $(u\otimes v)_{ij}=u_iv_j$ for all $i,j\in\{1,\ldots, n\}$.  We denote $\mathcal{L}^m$ the $m$-dimensional Lebesgue measure and $W^{1,p}(\Omega;\R^n)$ for $1\le p\le+\infty$, refers to   Sobolev spaces of functions defined on $\Omega$ with values in $\R^n$.
	
\section{The 2x2 case}\label{sec:dim2}
	
\begin{lemma}
The double well function $X \in \R^{2\times 2} \mapsto f(X) := |X-I_2|^2|X+I_2|^2$ is polyconvex.
\end{lemma}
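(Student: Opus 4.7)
The plan is to exhibit $f$ as a convex function of $X$ plus a linear term in $\det X$. Since in the $2\times 2$ case $\det X$ is the only nontrivial minor beyond the entries of $X$ itself, a function of the form $h(X)+c\det X$ with $h$ convex is polyconvex via the jointly convex certificate $g(X,\xi):=h(X)+c\xi$ satisfying $f(X)=g(X,\det X)$. So the whole task reduces to locating the right constant $c$ and verifying convexity of $h$.

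The first step is a direct algebraic expansion. Writing $|X\pm I_2|^2=|X|^2\pm 2\tr X+2$, one gets
\[
f(X)=(|X|^2+2)^2-4(\tr X)^2.
\]
Now the Cayley--Hamilton identity for $2\times 2$ matrices yields $(\tr X)^2=\tr(X^2)+2\det X$, which isolates the determinant:
\[
f(X)=(|X|^2+2)^2-4\tr(X^2)-8\det X.
\]
The second step is to recognize the remaining non-convex piece. The elementary identity $|X|^2-\tr(X^2)=\tfrac12|X-X^\top|^2$ (true for any square matrix) lets us rewrite
\[
f(X)=|X|^4+2|X-X^\top|^2+4-8\det X.
\]

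The third step is simply to observe that the first three terms are convex in $X$: $|X|^4$ is the square of the convex nonnegative function $|X|^2$, the term $|X-X^\top|^2$ is a positive semidefinite quadratic form (it vanishes precisely on symmetric matrices), and the constant $4$ is harmless. Setting $h(X):=|X|^4+2|X-X^\top|^2+4$ and $g(X,\xi):=h(X)-8\xi$ therefore gives a jointly convex $g$ with $f(X)=g(X,\det X)$, which is the definition of polyconvexity.

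The only non-mechanical step is guessing which multiple of $\det X$ to extract; once the splitting via Cayley--Hamilton is noticed, the rest is routine. I expect no genuine obstacle at this point, since the coefficients of $\det X$ needed to cancel the concave contribution $-4(\tr X)^2$ are forced by the expansion above, and the positivity of the Hessian of $h$ is immediate from the sum-of-squares form.
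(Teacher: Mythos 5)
Your proof is correct and is essentially the paper's argument: the same expansion $f(X)=(|X|^2+2)^2-4(\tr X)^2$, the same extraction of $-8\det X$, and the same convex remainder, since $2|X-X^\top|^2=4(X_{21}-X_{12})^2$ makes your final decomposition literally identical to the paper's. The only (harmless) difference is that you reach it via the basis-free identities $(\tr X)^2=\tr(X^2)+2\det X$ and $|X|^2-\tr(X^2)=\tfrac12|X-X^\top|^2$, which are exactly the two lemmas the paper itself uses later for the general $n\times n$ case, rather than the single coordinate identity used in its $2\times2$ proof.
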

 
	\begin{proof}
First expand the function:
	\begin{align*}
		f(X) &= |X-I_2|^2|X+I_2|^2 \\
		&= \left( |X|^2 - 2\tr X +  2 \right) \left( |X|^2 + 2\tr X + 2 \right) \\
		&= \left( (|X|^2 + 2) - 2\tr X \right) \left( (|X|^2 + 2) + 2\tr X \right) \\
		&= (|X|^2 + 2)^2 - 4(\tr X)^2 \\
		&= |X|^4 + 4|X|^2 + 4 - 4(\tr X)^2.
	\end{align*}
Now observe that
\[ (\tr X)^2 = |X|^2 + 2\det X - (X_{21}-X_{12})^2. \] Substituting this identity into the expression for $f(X)$ yields: \begin{align*} f(X) &= |X|^4 + 4|X|^2 + 4 - 4\left( |X|^2 + 2\det X - (X_{21}-X_{12})^2 \right) \\ &= |X|^4 + 4|X|^2 + 4 - 4|X|^2 - 8\det X + 4(X_{21}-X_{12})^2 \\ &= |X|^4 + 4(X_{21}-X_{12})^2 - 8\det X + 4\end{align*}
which is the sum of a strictly convex  function of $X$ and a linear function of $\det X$.
		\end{proof}

\subsection{A frame-invariant version}

A function $g : \R^{n\times n} \to \R$ is frame invariant if $g(RX)=g(X)$ for all rotation matrices $R$, i.e. all orthonormal matrices with determinant  $+1$, and all $X\in \R^{n\times n}$

\begin{lemma}
The function  $g:\R^{2\times 2}\to\R$ defined as  $g(X)=f(X)-4(X_{21}-X_{12})^2$ is polyconvex, frame invariant, non-negative, and vanishes exactly at  rotations, i.e., also at  $X=\pm I_2$.
\end{lemma}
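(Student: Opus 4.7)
The plan is to leverage the computation already carried out in the previous lemma. By that calculation,
\[
 f(X) \;=\; |X|^4 + 4(X_{21}-X_{12})^2 - 8\det X + 4,
\]
so subtracting the skew term immediately gives the very clean expression
\[
 g(X) \;=\; |X|^4 - 8\det X + 4.
\]
From here polyconvexity is essentially free: $|X|^4 = (|X|^2)^2$ is convex in $X$, being the composition of the non-decreasing convex scalar map $t\mapsto t^2$ on $[0,\infty)$ with the convex function $X\mapsto|X|^2$, while $-8\det X + 4$ is affine in the single minor $\det X$. Hence $g$ is a convex function of $(X,\det X)$.

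Frame invariance is a direct check: for any $R\in\mathrm{SO}(2)$ one has $|RX|^2=\tr(X^\top R^\top R X)=|X|^2$ and $\det(RX)=\det R\,\det X=\det X$, so $g(RX)=g(X)$.

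The only genuine work lies in the non-negativity and the identification of the zero set. I would parametrize by the singular values $\sigma_1,\sigma_2\ge 0$ of $X$, so that $|X|^2=\sigma_1^2+\sigma_2^2$ and $|\det X|=\sigma_1\sigma_2$; the AM--GM inequality $\sigma_1^2+\sigma_2^2\ge 2\sigma_1\sigma_2$ then gives the $2\times 2$ Hadamard-type bound $|X|^2\ge 2|\det X|\ge 2\det X$, with equality in the first step iff $\sigma_1=\sigma_2$. Setting $t=|X|^2$ and $d=\det X$, I would split into two cases. If $d\le 0$, then trivially $g(X)=t^2-8d+4\ge 4>0$. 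If $d\ge 0$, then $t\ge 2d$ gives $t^2\ge 4d^2$, and therefore
\[
 g(X)\;\ge\; 4d^2 - 8d + 4 \;=\; 4(d-1)^2 \;\ge\; 0.
\]
Equality throughout forces $d=1$, $t=2d=2$, and $\sigma_1=\sigma_2$; the last two conditions combined yield $\sigma_1=\sigma_2=1$, so $X$ is orthogonal, and $\det X=1$ puts $X$ in $\mathrm{SO}(2)$. Since both $I_2$ and $-I_2$ lie in $\mathrm{SO}(2)$ in dimension two, these are included in the zero set.

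The only step that requires a little care is the equality analysis in the final chain of inequalities: one must check that $\sigma_1=\sigma_2$, $\det X=1$, and $|X|^2=2$ occur simultaneously precisely on $\mathrm{SO}(2)$, which the above singular value argument handles cleanly. Everything else reduces to routine algebra once the formula $g(X)=|X|^4-8\det X+4$ is in hand.
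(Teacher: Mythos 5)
Your derivation of the closed form $g(X)=|X|^4-8\det X+4$ and the polyconvexity and frame-invariance arguments coincide with the paper's. Where you genuinely diverge is in the non-negativity and zero-set analysis: the paper exhibits a sum-of-squares decomposition $g(X)=(|X|^2-2)^2+4(X_{11}-X_{22})^2+4(X_{21}+X_{12})^2$ (note that the cross term forces the \emph{plus} sign in the last square; as printed with $(X_{21}-X_{12})^2$ the identity does not balance, so this appears to be a sign typo in the paper), from which non-negativity is immediate and the zero set is read off as $\{|X|^2=2,\ X_{11}=X_{22},\ X_{12}=-X_{21}\}=\mathrm{SO}(2)$. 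You instead work with the singular values, using $\sigma_1^2+\sigma_2^2\ge 2\sigma_1\sigma_2\ge 2\det X$ and the case split on the sign of $\det X$ to get $g(X)\ge 4(\det X-1)^2$ when $\det X\ge 0$ and $g(X)\ge 4$ otherwise; your equality analysis ($\sigma_1=\sigma_2=1$, $\det X=1$) correctly identifies the zero set as $\mathrm{SO}(2)$, and in fact establishes the ``vanishes \emph{exactly} at rotations'' claim more explicitly than the paper's proof, which only records $g(\pm I_2)=0$. Both routes are valid; the SOS identity is shorter and frame-invariance-friendly once found, while your AM--GM/singular-value argument is more systematic, requires no guessing, and generalizes naturally to the isotropic quantities $|X|^2$ and $\det X$. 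The one place to be slightly more explicit is the converse inclusion: for any $R\in\mathrm{SO}(2)$ one has $|R|^2=2$ and $\det R=1$, so $g(R)=4-8+4=0$, which confirms that all rotations (not just $\pm I_2$) lie in the zero set.
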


\begin{proof}
Polyconvexity and frame invariance follow from the expression
\[
g(X) := |X|^4+4-8\det X.
\]
Non-negativity of $g$ follows from the polynomial sums of squares (SOS) representation
\[
g(X) = (|X|^2-2)^2 + 4(X_{11}-X_{22})^2 + 4(X_{21}  +  X_{12})^2
\]
which also implies $g(I_2) = g(-I_2)=0$.
\end{proof}

\begin{remark}
Note that $g$ may be seen as a version of the Saint Venant-Kirchhoff stored energy density $W(X)=\frac12|X^T X-I_2|^2$, see, e.g. ,\cite{Dacorogna2008}. However, $W(Q)=0$ while $g(Q)>0$ for $Q\in \mathrm{O}(2)\setminus\mathrm{SO}(2)$, so that $g$ is not minimized on mechanically unacceptable deformations that change the orientation (dark wells).
\end{remark}

	\section{The 3x3 case}\label{sec:dim3}
		
	\begin{lemma}
		The double well function $X \in \R^{3\times 3} \mapsto f(X) := |X-I_3|^2|X+I_3|^2$ is polyconvex.
	\end{lemma}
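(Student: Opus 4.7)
The plan is to mimic the two-dimensional proof: expand $f$, rewrite $(\tr X)^2$ so that everything becomes either a convex function of $X$ or an affine function of the $2\times 2$ minors of $X$, and then read off polyconvexity. First I would expand using $|X \pm I_3|^2 = |X|^2 \pm 2\tr X + 3$ to obtain
\[
f(X) = \bigl(|X|^2 + 3\bigr)^2 - 4(\tr X)^2 = |X|^4 + 6|X|^2 + 9 - 4(\tr X)^2.
\]

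The crucial step is finding the three-dimensional analogue of the identity $(\tr X)^2 = |X|^2 + 2\det X - (X_{21} - X_{12})^2$ that did the work in the $2\times 2$ case. I would derive
\[
(\tr X)^2 = |X|^2 + 2\tr(\cof X) - \sum_{1 \le i < j \le 3}(X_{ij} - X_{ji})^2
\]
by expanding $(\tr X)^2 = \sum_i X_{ii}^2 + 2\sum_{i<j} X_{ii} X_{jj}$, recognising that $\sum_{i<j}(X_{ii}X_{jj} - X_{ij}X_{ji})$ is the sum of the principal $2\times 2$ minors and therefore equals $\tr(\cof X)$, and comparing with the full Frobenius norm $|X|^2 = \sum_i X_{ii}^2 + \sum_{i \neq j} X_{ij}^2$. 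Substituting into the expansion of $f$ yields
\[
f(X) = |X|^4 + 2|X|^2 + 9 + 4\sum_{1 \le i < j \le 3}(X_{ij} - X_{ji})^2 - 8\tr(\cof X).
\]

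Now $|X|^4 = (|X|^2)^2$ is strictly convex in $X$ (as a convex increasing function of the convex non-negative function $|X|^2$), the term $2|X|^2$ is convex, and $4\sum_{i<j}(X_{ij}-X_{ji})^2$ is a sum of squares of affine functions of the entries of $X$ and therefore convex. The remaining term $-8\tr(\cof X)$ is linear in the cofactor matrix. Hence $f$ is a convex function of $(X, \cof X, \det X)$, which is the definition of polyconvexity in dimension three. The main obstacle is spotting the correct replacement for $(\tr X)^2$: once the identity above is in hand, the rest is routine expansion and convexity bookkeeping, exactly as in the $2\times 2$ case, with $\tr(\cof X)$ playing the role previously played by $\det X$.
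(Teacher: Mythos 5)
Your proof is correct and follows essentially the same route as the paper: expand $f(X)=(|X|^2+3)^2-4(\tr X)^2$, then use the identity $(\tr X)^2=|X|^2+2\tr(\cof X)-\sum_{i<j}(X_{ij}-X_{ji})^2$ (which is exactly the paper's two lemmas $(\tr X)^2=\tr X^2+2\tr\cof X$ and $|X|^2=\tr X^2+2|X_a|^2$ combined, since $\sum_{i<j}(X_{ij}-X_{ji})^2=2|X_a|^2$) to land on a convex function of $X$ plus a linear function of the second-order minors. The only cosmetic difference is that you merge the two auxiliary identities into one and write the skew-symmetric correction entrywise rather than as $8|X_a|^2$.
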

	
	\begin{proof}
		First expand the function:
			\begin{align*}
		f(X) &= |X|^4 + 6|X|^2 + 9 - 4(\tr X)^2 \\
		& =  (|X|^2+3)^2-(2\tr X)^2.
		\end{align*}

From the identity
	\[
	\det(\lambda I_3 + X) = \lambda^3 + \lambda^2 \tr X  + \lambda \tr\cof X + \det X
	\]
	we define $\tr\cof X$ as  the trace of the  cofactor  matrix of $X$ or equivalently  the sum of the principal second-order minors of $X$.
	We substitute the identity
	\[
	(\tr X)^2 = \tr X^2 + 2 \tr\cof X
	\]
	into the expression for $f(X)$ to get:
	\begin{align*}
		f(X) &= |X|^4 + 6|X|^2 + 9 - 4(\tr X^2 + 2 \tr\cof X) \\
		&= |X|^4 + 6|X|^2 - 4\tr X^2 - 8 \tr\cof X + 9.
	\end{align*}
Decomposing $X$ into its symmetric part $X_s = \frac{1}{2}(X+X^T)$ and its skew-symmetric part $X_a = \frac{1}{2}(X-X^T)$, it holds $|X|^2 = |X_s|^2 + |X_a|^2$ and $\tr X^2 = |X_s|^2 - |X_a|^2$. This implies that the quadratic form
$|X|^2 - \tr X^2 = 2|X_{ a}|^2$ is non-negative and hence convex. Then
\[
f(X) = |X|^4 + 2|X|^2 + 8|X_{ a}|^2 - 8 \tr\cof X + 9
\]
is the sum of a strictly convex function of $X$ and a linear function of the second-order minors of $X$.

        \end{proof}

        \begin{corollary} [Isotropy]

        It holds for any rotation matrix $R\in \mathrm{SO}(3)$ and any $X\in\R^{3\times 3}$
        $$
        f(X)=f(RXR^T) 
        $$
      \end{corollary}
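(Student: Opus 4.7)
The plan is to work directly from the original product form $f(X)=|X-I_3|^2|X+I_3|^2$ rather than from the decomposed expression derived in the lemma, since the isotropy is most transparent at the level of the two factors.

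First I would observe that for any orthogonal matrix $R$, we have $I_3=RI_3R^\top$, so $RXR^\top \pm I_3 = R(X \pm I_3)R^\top$. Next I would invoke the standard fact that the Frobenius norm is invariant under orthogonal conjugation: $|RAR^\top|^2 = \tr(RAR^\top RA^\top R^\top) = \tr(AA^\top) = |A|^2$. Applying this to $A = X \pm I_3$ gives $|RXR^\top \pm I_3|^2 = |X \pm I_3|^2$, and multiplying the two identities yields $f(RXR^\top)=f(X)$.

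I expect no real obstacle here; the only thing worth noting is that the argument actually works for every $R \in \mathrm{O}(3)$, not just $\mathrm{SO}(3)$, since the sign of $\det R$ never enters. As a sanity check one could alternatively verify the invariance term-by-term on the decomposition $f(X)=|X|^4+2|X|^2+8|X_s|^2-8\tr\cof X+9$: the Frobenius norm is invariant under conjugation, the symmetric part transforms as $(RXR^\top)_s = RX_sR^\top$, and for $R \in \mathrm{SO}(3)$ one has $\cof R = R$, hence $\cof(RXR^\top)=R(\cof X)R^\top$, whose trace equals $\tr\cof X$. Both routes give the same conclusion, but the first is shorter and I would present that one.
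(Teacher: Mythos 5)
Your proof is correct, and your primary route is genuinely different from (and shorter than) the paper's. The paper verifies isotropy term by term on the decomposition $f(X)=|X|^4+2|X|^2+8|X_s|^2-8\tr\cof X+9$, invoking $\cof(RXR^\top)=R\,\cof X\,R^\top$ and the similarity-invariance of the trace --- essentially the ``sanity check'' you sketch in your second paragraph. Your main argument instead works on the undecomposed product: since $RXR^\top\pm I_3=R(X\pm I_3)R^\top$ and the Frobenius norm is invariant under orthogonal conjugation, each factor is separately invariant. This buys you three things: it avoids the cofactor machinery entirely, it shows the statement holds for all $R\in\mathrm{O}(3)$ (the paper's restriction to $\mathrm{SO}(3)$ is needed only for $\cof R=R$, not for the conclusion), and it generalizes verbatim to any dimension $n$. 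What the paper's route buys in exchange is the observation that each summand of the polyconvex decomposition --- the convex part and the null-Lagrangian part $-8\tr\cof X$ --- is individually isotropic, which is mildly informative in the context of the surrounding analysis but not needed for the corollary itself. Your choice to present the direct argument is the right one.
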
  

      \begin{proof}
       It follows from the identity  $$\cof(RXR^T)=\cof R\,\cof X\,\cof R^T= R\,\cof X \,R^T  $$
       and from the fact that similar matrices $\cof(RXR^T)$ and $\cof X$  have the same   trace. 
      \end{proof}
	

\subsection{Frame invariant version}

\begin{lemma}
	The function $g:\R^{3\times 3}\to\R$, $g(X)= f(X) + 4 (\tr X)^2 -9$ is convex, frame invariant, non-negative and vanishing at $X=0$.
	The function $\hat g:\R^{3\times 3}\to\R$,  $\hat g(X)= f(X) + 4 (\tr X)^2 - 12 |X|^2$ is frame invariant, non-negative,  vanishing at $X=\pm I_3$, but it is not polyconvex.
\end{lemma}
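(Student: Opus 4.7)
The plan is to exploit the expansion $f(X) = |X|^4 + 6|X|^2 + 9 - 4(\tr X)^2$ obtained in the proof of the previous lemma; each of the two additive modifications is designed so that the awkward $(\tr X)^2$ term drops out.

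For the first claim, direct cancellation gives
\[
f(X) + 4(\tr X)^2 - 9 = |X|^4 + 6|X|^2.
\]
I would then verify each property in turn. Frame invariance follows from $|RX|^2 = |X|^2$ for any $R \in \mathrm{SO}(3)$; non-negativity and vanishing at $X = 0$ are immediate; and polyconvexity holds because this is a convex function of $X$ alone, namely a sum of the convex quadratic $|X|^2$ and of $|X|^4 = (|X|^2)^2$, which is the composition of the non-decreasing convex map $t \mapsto t^2$ on $[0,\infty)$ with the non-negative convex function $X \mapsto |X|^2$. Any convex function of $X$ is trivially polyconvex.

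For the second claim, the same substitution yields
\[
f(X) + 4(\tr X)^2 - 12|X|^2 = |X|^4 - 6|X|^2 + 9 = (|X|^2 - 3)^2,
\]
from which frame invariance (since $|RX|^2 = |X|^2$), non-negativity, and vanishing at $X = \pm I_3$ (since $|\pm I_3|^2 = 3$) are all immediate.

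The only step requiring any thought is to show that this function fails to be polyconvex. Since polyconvexity implies rank-one convexity, it suffices to exhibit a single rank-one direction along which it is not convex. Taking $X_0 = 0$ and any rank-one matrix $M$ with $|M| = 1$ (for instance, the matrix with a $1$ in the top-left entry and zeros elsewhere), we have $|X_0 + tM|^2 = t^2$, so the restriction is
\[
t \mapsto (t^2 - 3)^2,
\]
with second derivative $12t^2 - 12$, strictly negative at $t = 0$. This rules out rank-one convexity, hence polyconvexity. The main obstacle, if any, is simply recalling the rank-one convexity necessary condition; once it is invoked, all four properties reduce to a one-line computation or a standard convex-analysis observation.
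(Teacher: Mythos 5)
Your proposal is correct and follows essentially the same route as the paper: the same cancellations yielding $|X|^4+6|X|^2$ and $(|X|^2-3)^2$, with polyconvexity of the first read off from ordinary convexity and non-polyconvexity of the second deduced from failure of convexity at the origin (the paper cites the negative definite Hessian there, which in particular covers your rank-one direction). Your explicit rank-one test is just a slightly more spelled-out version of the paper's argument, correctly invoking that polyconvexity implies rank-one convexity.
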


\begin{proof}
The statements of the first sentence follow  from the expression	
\[
	f(X) + 4(\tr X)^2 - 9 = |X|^4 + 6|X|^2 = |X|^2(|X|^2+6).
	\]
The statements of the second sentence
follow from the SOS decomposition
\[
	f(X) + 4(\tr X)^2 - 12 |X|^2  
	= (|X|^2-3)^2,
\]
hence $9=\hat g(0)>\frac12(\hat g(\mathrm {diag }(\sqrt{3}, 0, 0))+\hat g(\mathrm {diag }(-\sqrt{3}, 0, 0)))=0$. Therefore, $\hat g$ is not rank-one convex (see \cite{Dacorogna2008})  and, consequently, not polyconvex.
\end{proof}

\section{Double well polyconvexity}\label{sec:poly}

More generally, given two matrices $X_1$ and $X_2$ of $\R^{n\times n}$ with $n \geq 2$, consider the double well function
\begin{align}\label{function}
X \in \R^{n\times n} \mapsto
f(X):=|X-X_{1}|^{2}\,|X-X_{2}|^{2}.
\end{align}

\begin{theorem}\label{polyconvex}
 
Let $\sigma_1 \geq \sigma_2 \geq \cdots \geq \sigma_n \geq 0$ denote the singular values of the matrix difference $X_1-X_2$.
Function $f$ is polyconvex if and only if the following spectral condition \begin{align}\label{spectral-condition}
\sigma_1^2+\sigma^2_2 + \cdots + \sigma^2_n \geq 2\sigma^2_1
\end{align}
holds.

\end{theorem}

\begin{proof}
Let
\[
A \;:=\;\tfrac12\,(X_{1}-X_{2}), 
\quad 
B \;:=\;\tfrac12\,(X_{1}+X_{2})
\]

so that
\begin{equation}\label{eq:fab}
f(X) = |(X-B)-A|^2|(X-B)+A|^2
\end{equation}
and consider the singular value decomposition
\[
A = U \Sigma V^T
\]
with $U, V \in \mathrm{O}(n)$, $\Sigma = \mathrm{diag}\:(\sigma_1, \sigma_2, \ldots, \sigma_n)$, $\sigma_1 \geq \sigma_2 \geq \cdots \geq \sigma_n \geq 0$.
Upon defining
\begin{align}\label{xz}
    Z:=U^T(X-B)V,
\end{align}
the terms in \eqref{eq:fab} can be rewritten as
\[
|(X-B) \pm A| = |U^T((X-B) \pm A)V| = |U^T(X-B)V \pm U^TAV| = |Z \pm \Sigma|.
\]
It follows from the Cauchy-Binet formula for the determinant of the product of two matrices (see e.g.~\cite[p. 103]{marcus1}) and from the formula for the determinant of the sum of two matrices (see e.g.~\cite[p.~162]{marcus2}) that  polyconvexity is invariant under left and right orthogonal transformations and under translation. Consequently,  polyconvexity of $f$ is equivalent to polyconvexity of the function $g:\R^{n\times n}\to\R$:
\begin{equation}\label{g}
f(X)=g(Z) := |Z-\Sigma|^{2}\,|Z+\Sigma|^{2},
\end{equation}
where $X$ and $Z$ are related by \eqref{xz}.

    Denoting $\langle A,B \rangle := \tr A^T B$, it holds
    \[
    |Z\pm\Sigma|^2 = \underbrace{|Z|^2+|\Sigma|^2}_{a}\pm2\underbrace{\langle Z,\Sigma\rangle}_{b}
    \]
and
\begin{equation}\label{gb}
g(Z) = (a-b)(a+b) = a^2-b^2 = (|Z|^2+|\Sigma|^2)^2 - 4\langle Z,\Sigma\rangle^2.
\end{equation}
Now expand
\[
(|Z|^2+|\Sigma|^2)^2 = |Z|^4 + 2|\Sigma|^2|Z|^2 + |\Sigma|^4
\]
and since $\Sigma$ is diagonal,
\[
4\langle Z,\Sigma\rangle^2 = 4(\sum_{1\leq i\leq n} \sigma_i Z_{ii})^2 = 
4\sum_{1\leq i\leq n} \sigma_i^2 Z_{ii}^2 + 8\sum_{1\leq i<j\leq n}\sigma_i\sigma_j Z_{ii}Z_{jj}.        
\]
Substracting yields 
\begin{equation}\label{eq:g2}
g(Z)=|Z|^4 + 2|\Sigma|^2|Z|^2 + |\Sigma|^4
- 4\sum_{i=1}^n \sigma_i^2 Z_{ii}^2 - 8\sum_{1\le i<j\le n}\sigma_i\sigma_j Z_{ii}Z_{jj}.
\end{equation}
Split the quadratic term into diagonal/off-diagonal parts:
\begin{equation}\label{eq:a}
2|\Sigma|^2|Z|^2
=2|\Sigma|^2\sum_{i=1}^n Z_{ii}^2 \;+\; 2|\Sigma|^2\sum_{i\ne j} Z_{ij}^2
=2|\Sigma|^2\sum_{i=1}^n Z_{ii}^2 \;+\; 2|\Sigma|^2\sum_{i<j}(Z_{ij}^2+Z_{ji}^2). 
\end{equation}
Collect the diagonal terms from \eqref{eq:g2} and \eqref{eq:a}:
\begin{equation}\label{eq:b}
2|\Sigma|^2\sum_{i=1}^n Z_{ii}^2 - 4\sum_{i=1}^n \sigma_i^2 Z_{ii}^2
=2\sum_{i=1}^n (|\Sigma|^2-2\sigma_i^2)\,Z_{ii}^2
\end{equation}
and pairwise complete the off-diagonal entries in \eqref{eq:a}, i.e., for each \(i<j\):
\begin{equation}\label{eq:c}
\begin{aligned}
2|\Sigma|^2(Z_{ij}^2+Z_{ji}^2)
&=2
\begin{bmatrix}Z_{ij}&Z_{ji}\end{bmatrix}
\!\begin{pmatrix}|\Sigma|^2&0\\[2pt]0&|\Sigma|^2\end{pmatrix}\!
\begin{bmatrix}Z_{ij}&Z_{ji}\end{bmatrix}^T\\
&=2
\begin{bmatrix}Z_{ij}&Z_{ji}\end{bmatrix}
\!\underbrace{\begin{pmatrix}|\Sigma|^2&-2\sigma_i\sigma_j\\[2pt]-2\sigma_i\sigma_j&|\Sigma|^2\end{pmatrix}}_{M_{ij}}\!
\begin{bmatrix}Z_{ij}& Z_{ji}\end{bmatrix}^T
\;+\;8\,\sigma_i\sigma_j\,Z_{ij}Z_{ji}.
\end{aligned}
\end{equation}
Substitute into \eqref{eq:g2} and regroup, using \eqref{eq:b} for the diagonal part and \eqref{eq:c} inside \eqref{eq:a} for the off–diagonal part:
\begin{equation}\label{eq:d}
\begin{aligned}
g(Z)
&=|Z|^4 + |\Sigma|^4
+ 2\sum_{i=1}^n (|\Sigma|^2-2\sigma_i^2)\,Z_{ii}^2 \\
&\quad
+ 2\sum_{1\le i<j\le n}
\begin{bmatrix}Z_{ij}&Z_{ji}\end{bmatrix}
M_{ij}
\begin{bmatrix}Z_{ij}&Z_{ji}\end{bmatrix}^T
\\
&\quad
+ 8\sum_{1\le i<j\le n}\sigma_i\sigma_j Z_{ij}Z_{ji}
- 8\sum_{1\le i<j\le n}\sigma_i\sigma_j Z_{ii}Z_{jj}.
\end{aligned}
\end{equation}
Introducing
\[
s_2(Z):=\sum_{1\le i<j\le n}\sigma_i\sigma_j\,(Z_{ii}Z_{jj}-Z_{ij}Z_{ji}) = \sum_{1\le i<j\le n}\sigma_i\sigma_j\,\det
\begin{pmatrix}Z_{ii}&Z_{ij} \\
Z_{ji} & Z_{jj}\end{pmatrix}
\]
we rewrite
\[
8\sum_{i<j}\sigma_i\sigma_j Z_{ij}Z_{ji}
- 8\sum_{i<j}\sigma_i\sigma_j Z_{ii}Z_{jj}
= -\,8\,s_2(Z)
\]
and substitute into \eqref{eq:d} to get
\begin{equation}\label{eq:gfull}
g(Z) = |Z|^4 + |\Sigma|^4 
+ 2\sum_{i=1}^n (|\Sigma|^2-2\sigma_i^2)\,Z_{ii}^2
+ 2\sum_{1\le i<j\le n}
\begin{bmatrix}Z_{ij}&Z_{ji}\end{bmatrix}
M_{ij}
\begin{bmatrix}Z_{ij}&Z_{ji}\end{bmatrix}^T
- 8\,s_2(Z).
\end{equation}
Now if $\sigma^2_2 + \cdots + \sigma^2_n \geq \sigma^2_1$ then $|\Sigma|^2 = \sigma^2_1 + \sigma^2_2 + \cdots + \sigma^2_n \geq 2 \sigma^2_1$, so the terms $(|\Sigma|^2-2\sigma_i^2)\,Z_{ii}^2$ in \eqref{eq:gfull} are positive and convex in $Z$. For each $(i,j)$, the eigenvalues of the matrix $M_{ij}$ defined in \eqref{eq:c} are $|\Sigma|^2\pm2\sigma_i\sigma_j$, and $|\Sigma|^2 \ge 2\sigma^2_1 \geq \sigma_i^2+\sigma_j^2\ \ge\ 2\sigma_i\sigma_j$. This implies that all matrices $M_{ij}$ are positive semidefinite, and hence the terms $\begin{bmatrix}Z_{ij}&Z_{ji}\end{bmatrix}
M_{ij}
\begin{bmatrix}Z_{ij}&Z_{ji}\end{bmatrix}^T$ in \eqref{eq:gfull} are positive and convex in $Z$. The term $|Z|^4$ is strictly convex in $Z$. The remaining term is $-8\,s_2(Z)$, which is linear in the principal $2\times2$ minors of $Z$.
By definition, this means that $g$ is polyconvex, and that proves the sufficiency of the spectral condition $\sigma^2_2 + \cdots + \sigma^2_n \geq \sigma^2_1$.

To prove necessity of the spectral condition, we will use the Legendre-Hadamard inequality. Polyconvexity of $g$ implies rank‑one convexity \cite[Chapter 5]{Dacorogna2008}, i.e., positivity of the second derivative of $g$ in the rank-one directions:
\[
\nabla^2 g(Z)[u\otimes v,u\otimes v] \ge0,
\quad\forall\,Z \in \R^{n\times n}, \forall u,v \in \R^{n}.
\]
Recalling equation \eqref{gb}, we have $g(Z)=(|Z|^2+|\Sigma|^2)^2-4\langle Z,\Sigma\rangle^2$ and at $Z=0$ we get the inequality
\begin{equation}\label{positivity}
\nabla^2 g(0)[u\otimes v,u\otimes v]=4(|\Sigma|^2\,|u\otimes v|^2-2\langle u \otimes v,\Sigma\rangle^2) \geq 0
\end{equation}
Now $|u\otimes v|^2=|u|^2|v|^2$, and since $\Sigma=\mathrm{diag}(\sigma_1,\dots,\sigma_n)$, it holds
\[
\langle u\otimes v,\Sigma\rangle=\sum_{i}\sigma_i\,u_i v_i
= \langle\Sigma\,u,v\rangle.
\]
Thus inequality \eqref{positivity} reads
\[
|\Sigma|^2 \,|u|^2|v|^2\ \ge\ 2\,\langle\Sigma\,u,v\rangle^2,
\qquad\forall u,v \in\R^n.
\]
For unit vectors  $u$ and $v$, the quantity on the right is maximized by aligning $u$ and $v$ with the top singular direction, i.e. $u=v=e_1$, which yields
\[
\sigma^2_1+\sigma^2_2+\cdots+\sigma^2_n \ge 2\,\sigma_1^2
\]
which is our spectral condition   \eqref{spectral-condition}.

\end{proof}

\begin{remark}\label{remark}
    If $n=2$  we get from \eqref{spectral-condition} that  $\sigma_1=\sigma_2$ and if $n=3$ we get the triangle inequality $\sigma_i^2+\sigma_j^2\ge\sigma_k^2$ for all  mutually distinct indices $i,j,k\in\{1,2,3\}$. 
\end{remark}

\begin{remark}

    Polyconvexity of $g$ from \eqref{gb} is guaranteed if the weaker condition of rank-one convexity holds at the most critical point, namely zero,  the local maximum of $g$ on the line segment $[-\Sigma,\Sigma]$.
\end{remark}

\section{Existence and uniqueness of minimizers}\label{sec:cov}

\begin{theorem} Let $\Omega \subset \R^n$, $n \ge 2$, be a bounded, open set with a Lipschitz boundary.  Consider a double well energy density function $f$ defined in \eqref{function} and satisfying the spectral condition \eqref{spectral-condition}.  Given a function $y_0 \in W^{1,4}(\Omega; \R^n)$ and the total energy functional 
$$I(y):= \int_\Omega f(\nabla y(x)) \, dx, $$ the minimization problem 
	\begin{align}\label{minI}
	  \min_{y \in W^{1,4}(\Omega; \R^n)} I(y) \text{ s.t. } y = y_0 \text{ on } \partial\Omega    
	\end{align} 
    admits a  solution and 
    the minimizer is unique.  \end{theorem}

\begin{proof}
As in the proof of Theorem \ref{polyconvex},
let $A:=\tfrac12(X_1-X_2)$, $B:=\tfrac12(X_1+X_2)$ and the singular value decomposition
$A=U\Sigma V^T$ with $\Sigma=\mathrm{diag}(\sigma_1,\dots,\sigma_n)$ and
$\sigma_1\ge\cdots\ge\sigma_n\ge0$. Let $y\in W^{1,4}(\Omega;\R^n)$ with $y=y_0$ on $\partial\Omega$.
We define 
\[
Z(x):=U^T(\nabla y(x)-B)\,V.
\]

Note that $Z=\nabla u\circ V^{-1}$, i.e., $Z(x)=\nabla u(V^{-1}x)$, where  $u\in W^{1,4}(V^{-1}\Omega;\R^n)$ and  $[u\circ V^{-1}]\in W^{1,4}(\Omega;\R^n)$
$$u(V^{-1}x):=U^\top y(x)-U^\top Bx.$$
Here $V^{-1}\Omega:=\{V^{-1}x:\, x\in\Omega\}$ and $\nabla u$ above is the gradient with respect to the independent variable of $u$ from $V^{-1}\Omega$, not with respect to $x\in\Omega$.

By orthogonal and translation invariance, it holds for almost all $x\in\Omega$ that 
\[
f(\nabla y(x))= g(Z(x))= |Z(x)-\Sigma|^2\,|Z(x)+\Sigma|^2 = f_C(Z(x)) + f_L(Z(x))
\]
where 
\[
f_C(Z):=|Z|^4
+ 2\sum_{i=1}^n \big(|\Sigma|^2-2\sigma_i^2\big)\,Z_{ii}^2
+ 2\sum_{i<j}
\begin{bmatrix}Z_{ij}&Z_{ji}\end{bmatrix}
M_{ij}
\begin{bmatrix}Z_{ij}\\ Z_{ji}\end{bmatrix}
+ |\Sigma|^4,
\qquad
f_L(Z):=-\,8\,s_2(Z)
\]
according to the algebraic decomposition \eqref{eq:gfull}.
By the spectral condition of Theorem~\ref{polyconvex}, $f_C$ is convex in $Z$. Moreover, since $|Z|^4$ is strictly convex,
$f_C$ is strictly convex. The term $f_L$ is affine in the $2\times2$ minors of $Z$.

As $|\det V|=1$ we have 
\begin{align*}
I(y)=\int_\Omega f(\nabla y(x))\, dx=\int_\Omega g(\nabla u(V^{-1}x))\, dx =\int_{V^{-1}\Omega}g(\nabla u(\tilde x))\, d\tilde x  :=J(u)
\end{align*}

Let
\[
J_C(u):=\int_{V^{-1}\Omega} f_C(\nabla u(x))\,dx, \qquad J_L(u):=\int_{V^{-1}\Omega} f_L(\nabla (x))\,dx
\]
so that
\[
J(u)= J_C(u)+J_L(u).
\]

We reformulate \eqref{minI} as a minimization problem for $J$:
\begin{align}\label{minJ}
	  \min_{u \in W^{1,4}(V^{-1}\Omega; \R^n)} J(u) \text{ s.t. } u( x) = U^\top y_0(Vx)-U^\top BVx\text{ for } x\in  \partial V^{-1}\Omega.   
	\end{align} 
 The problem \eqref{minJ} admits a unique solution. Indeed, 
by the classical characterization of null Lagrangeans (see, e.g., \cite[Thm.~5.21, Cor.~5.22]{Dacorogna2008}), we get  $J_L(u)=J_L(u_0)$  for all admissible $u$.  Here,
$u_0(x) = U^\top y_0(Vx)-U^\top BVx$ for almost all  $x\in V^{-1}\Omega$. This means that $J_L$ is constant in our minimization problem.  Moreover, $J$ is coercive on 
$W^{1,4}(V^{-1}\Omega; \R^n)$ and the admissible set of mappings on which we minimize is convex.  The existence of minimizers then follows by the direct method and the strict convexity of $J_C$ implies the uniqueness. Let us denote the minimizer by $\tilde u$. Consequently, 
$\tilde y(x)=U(\tilde u(V^{-1}x)+U^\top Bx)$ for $x\in\Omega$ is the unique minimizer of $I$  in \eqref{minI}.

\end{proof}

\section*{Conflict Of Interest Statement}
The authors have no conflict of interest.

\section*{Data Availability Statement}

This manuscript has no associated data.

\section*{Acknowledgements}
This work was co-funded by the European Union/M\v{S}MT \v{C}R  under the ROBOPROX project (reg.~no.~CZ.02.01.01/00/22 008/0004590). 
Moreover, the second author was also supported by the GA\v{C}R project 24-10366S and he  is indebted to  Prof.~Jonathan Bevan for valuable discussions.  We sincerely thank the anonymous reviewer for the insightful comments and for bringing to our attention the flaw in the original statement and proof of Theorem 1.

\end{document}